\newtheorem{theorem}{Theorem}[section]
\newtheorem{lemma}[theorem]{Lemma}
\theoremstyle{remark}
\newtheorem{claim}{Claim}
\begin{document}
\newcounter{my}
\newenvironment{mylabel}
{
\begin{list}{(\roman{my})}{
\setlength{\parsep}{-1mm}
\setlength{\labelwidth}{8mm}
\usecounter{my}}
}{\end{list}}

\newcounter{my2}
\newenvironment{mylabel2}
{
\begin{list}{(\alph{my2})}{
\setlength{\parsep}{-0mm} \setlength{\labelwidth}{8mm}
\setlength{\leftmargin}{3mm}
\usecounter{my2}}
}{\end{list}}

\newcounter{my3}
\newenvironment{mylabel3}
{
\begin{list}{(\alph{my3})}{
\setlength{\parsep}{-1mm}
\setlength{\labelwidth}{8mm}
\setlength{\leftmargin}{10mm}
\usecounter{my3}}
}{\end{list}}

\title{\bf On the feedback number of $3-$uniform hypergraph}

\author{Zhuo Diao ${}^{a}$\thanks{Corresponding author. E-mail: diaozhuo@amss.ac.cn}\quad Zhongzheng Tang ${}^{b,c,d}$
}
\date{
 ${}^a$ School of Statistics and Mathematics, Central University of Finance and Economics
Beijing 100081, China\\
${}^b$ Academy of Mathematics and Systems Science, Chinese Academy of Sciences\\
 Beijing 100190, China\\
 ${}^c$ School of Mathematical Sciences, University of Chinese Academy of Sciences\\
 Beijing 10049, China\\
 ${}^d$ Department of Computer Science, City University of Hong Kong\\
 Hong Kong, China 
}
\maketitle

\begin{abstract}
Let $H=(V,E)$ be a hypergraph with vertex set $V$ and edge set $E$. $S\subseteq V$ is a feedback vertex set (FVS) of $H$ if $H\setminus S$ has no cycle and $\tau_c(H)$ denote the minimum cardinality of a FVS of $H$. In this paper, we prove $(i)$ if $H$ is a linear $3$-uniform hypergraph with $m$ edges, then $\tau_c(H)\le m/3$. $(ii)$ if $H$ is a $3$-uniform hypergraph with $m$ edges, then $\tau_c(H)\le m/2$ and furthermore, the equality holds on if and only if every component of $H$ is a $2-$cycle.\\

Let $H=(V,E)$ be a hypergraph with vertex set $V$ and edge set $E$. $A\subseteq E$ is a feedback edge set (FES) of $H$ if $H\setminus A$ has no cycle and $\tau_c'(H)$ denote the minimum cardinality of a FES of $H$. In this paper, we prove if $H$ is a $3$-uniform hypergraph with $p$ components, then $\tau_c'(H)\le 2m-n+p$.

% \noindent{\bf Keywords:} Nonatomic selfish routing; Braess's paradox; Single-commodity/Multcommodity network; Series-parallel g
\end{abstract}

\noindent{\bf Keywords:} Feedback vertex set (FVS); Feedback edge set (FES); $3$-uniform hypergraph

%\section{Introduction}

\section{Hypergraphs}

Let $H=(V,E)$ be a hypergraph with vertex set $V$ and edge set $E$. Hypergraph $H$ is called {\em linear} if $|e\cap f|\le1$ for any distinct edges $e,f\in E$, and {$3$-uniform if $|e|=3$ for each $e\in E$. Let $k\geq 2$ be an integer. A cycle of length $k$ is a sequence $\{v_{1}e_{1}v_{2}e_{2}...v_{k}e_{k}v_{1}\}$ with: $(1) \{e_{1}, e_{2},..., e_{k}\}$ are distinct edges of $H$. $(2) \{v_{1}, v_{2},..., v_{k}\}$ are distinct vertices of $H$. $(3) \{v_{i},v_{i+1}\}\subseteq e_{i}$ for each $i\in [k]$, here $v_{k+1}=v_{1}$.~\cite{berge1989}\\\\
We say $S\subseteq V$ is a feedback vertex set (FVS) of $H$ if $H\setminus S$ has no cycle and $\tau_c(H)$ denote the minimum cardinality of a FVS of $H$.\\
We say $A\subseteq E$ is a feedback edge set (FES) of $H$ if $H\setminus A$ has no cycle and $\tau_c'(H)$ denote the minimum cardinality of a FES of $H$.\\
Obviously for any hypergraph $H=(V,E)$, we have $\tau_c(H)\leq \tau_c'(H)$.\\
There are many literatures about the feedback vertex/edge number in hypergraph theory.~\cite{berge1989}\cite{Brualdi2009}\cite{diestel2005}\cite{BVB1999}\cite{EP1965}\cite{FGPR2008}\cite{FPRM2000}
%We say $S\subseteq V$ is a transversal of $H$ if $\forall e\in E, \exists v\in S, v\in e$ and $\tau(H)$ denote the minimum cardinality of a traversal of $H$.\\
%We say $A\subseteq E$ is a matching of $H$ if $\forall \{e_{1}\neq e_{2}\}\subseteq A, e_{1}\cap e_{2}= \emptyset$ and $\nu(H)$
%denote the maximum cardinality of a matching of $H$.\\

%\tau_c(H)\leq \tau(H)\leq \nu(H)$.

\section{The feedback vertex number}

\subsection{The bound for linear hypergraph}

In this subsection, for linear $3$-uniform hypergraph, we give the bound of the feedback vertex number.

\begin{theorem}\label{1/3}
Let $H$ be a linear $3$-uniform hypergraph with $m$ edges. Then $\tau_c(H)\le m/3$.
\end{theorem}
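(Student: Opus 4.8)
The plan is to argue by induction on the number of edges $m$. The base case $m=0$ is immediate, since an edgeless hypergraph has no cycle and so $\tau_c(H)=0\le m/3$. For the inductive step I would first dispose of edges that carry no cycles: if some edge $e$ lies on no cycle of $H$, then the cycles of $H$ and of $H-e$ coincide, whence $\tau_c(H)=\tau_c(H-e)$; as $H-e$ is again linear and $3$-uniform with $m-1$ edges, the induction hypothesis yields $\tau_c(H)\le(m-1)/3<m/3$. Hence I may assume every edge of $H$ lies on at least one cycle.

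Next I would exploit linearity. Since $|e\cap f|\le 1$, no two edges meet in two vertices, so $H$ has no cycle of length $2$ and every cycle has length at least $3$. I would then pick a shortest cycle $C=v_1e_1v_2e_2\cdots v_ke_kv_1$, so $k\ge 3$. The point of choosing $C$ shortest is to force a chord-free local structure: no edge of $H$ can join two non-consecutive connecting vertices of $C$, since such an edge would produce a strictly shorter cycle. This rigidity is what I would use to control which other cycles pass through the edges of $C$, and hence which cycles remain to be hit after a reduction.

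The heart of the argument is to select one connecting vertex $v$ of $C$, place it in the feedback set, and pass to a strictly smaller linear $3$-uniform hypergraph to which the induction hypothesis applies, shedding at least $3$ edges so that the bookkeeping $\tau_c(H)\le 1+(m-3)/3=m/3$ closes. When $C$ has a connecting vertex $v$ of degree at least $3$, that vertex already meets three distinct edges, and removing $v$ together with a suitable triple of its incident edges destroys every cycle through $v$ while deleting $3$ edges. The delicate case, which I expect to be the main obstacle, is when every connecting vertex of every shortest cycle has degree exactly $2$: then $C$ is an isolated ``ring'' of $k\ge 3$ edges, consecutive edges sharing a single vertex, so deleting one $v_i$ already breaks a cycle spanning $k\ge 3$ edges and the ratio is favorable, but one must choose which edges to delete so as to preserve both $3$-uniformity and linearity while guaranteeing that the discarded cycles need never be hit again. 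Because an edge $\{x,y,z\}$ can act as a cycle link through any of the pairs $\{x,y\}$, $\{y,z\}$, $\{x,z\}$, the genuine work is to verify that the chosen vertex, together with the deleted edges, meets \emph{every} surviving cycle of $H$; here I would lean on the chord-freeness of the shortest cycle to exclude the stray cycles that would otherwise escape the feedback set.
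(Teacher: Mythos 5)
Your framework---minimal counterexample (equivalently, induction on $m$), discarding edges that lie on no cycle, the bookkeeping $\tau_c(H)\le 1+(m-3)/3=m/3$, and the reduction when some vertex of the shortest cycle has degree at least $3$---matches the opening claims of the paper's proof and is sound. The gap is in what you call the delicate case, where every connecting vertex of the shortest cycle $C=v_1e_1v_2e_2\cdots v_ke_kv_1$ has degree $2$, and it is a genuine one. First, your assertion that $C$ is then an ``isolated ring'' is false: each edge $e_i=\{v_i,u_i,v_{i+1}\}$ has a third vertex $u_i$, and nothing so far constrains the degrees of the $u_i$; these vertices attach $C$ to the rest of $H$. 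Second, the budget does not close the way you suggest: if you delete the $k$ edges of $C$ and add a single vertex $v_j$ to the feedback set, then a cycle of $H$ that enters $C$ through a pair $\{u_i,v_i\}$ or $\{u_i,v_{i+1}\}$ of a deleted edge and wanders off into the rest of $H$ is hit neither by $v_j$ nor by the minimum FVS of the reduced hypergraph (that cycle simply does not exist there, so the reduced FVS has no obligation to meet it). One vertex cannot pay for $k$ deleted edges; you are entitled to roughly $k/3$ vertices, and the entire difficulty is choosing them so that all such escaping cycles are hit.

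This is exactly where the paper does its real work, none of which appears in your sketch. The paper first eliminates degree-$1$ vertices as well, so that $H$ is genuinely $2$-regular (not merely of maximum degree $2$ at the connecting vertices); it then excludes triangles and $4$-cycles by separate ad hoc reductions, and finally runs a mod-$3$ case analysis on the length $k\ge 5$ of a shortest cycle: for $k=3t$ it deletes the $k$ cycle edges and adds $v_3,v_6,\ldots,v_{3t}$; for $k\equiv 1,2\pmod 3$ it must additionally delete the second edges incident to $u_1$ (and $u_3$) and place $u_1$ (and $u_3$) in the feedback set, so that the count comes out to $t+1$ vertices against $3t+3$ deleted edges. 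The verification that these choices hit every surviving cycle leans on $2$-regularity---each $u_i$ has exactly one edge besides $e_i$, and each $v_i$ has no edge outside $C$---which is a global structural fact that your chord-freeness observation cannot replace. Without establishing $2$-regularity (in particular, handling degree-$1$ third vertices) and without the explicit selection of roughly $k/3$ vertices together with the accompanying extra edge deletions, your inductive step does not go through.
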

\begin{proof}
Suppose the theorem fails. Let us take out the counterexample $H=(V,E)$ with minimum number of edges, thus $\tau_c(H)> m/3$.
Obviously $m\geq 3$ and without loss of generality, we can assume $H$ has no isolated vertex. We will break the proof into a series of claims.

\begin{claim}\label{cycle}
  Every edge in $E$ is contained in some cycle in $H$.
\end{claim}

If there exists $e\in E$ which doesn't belong to any cycle of $H$. Then $\tau_c(H)= \tau_c(H\setminus e)$. Because $H=(V,E)$ is the counterexample with minimum number of edges, we have $\tau_c(H\setminus e)\leq (m-1)/3$. Thus $\tau_c(H)\leq m/3$, this is a contradiction with $\tau_c(H)> m/3$.

\begin{claim}\label{degree2}
 $\forall v\in H, d(v)\leq 2$.
\end{claim}

If there exists $v\in V$ with $d(v)\geq3$ in $H$, then $\tau_c(H\setminus v)\leq (m-d(v))/3\leq (m-3)/3$ because $H=(V,E)$ is the counterexample with minimum number of edges. Considering a minimum FVS $S$ of $H'=H\setminus v$, we have $S\subseteq V\setminus v$ and $|S|\leq (m-3)/3$. Thus $S\cup v$ is a FVS for $H$ and $|S\cup v|= |S|+1\leq m/3$, this is a contradiction with $\tau_c(H)> m/3$.

\begin{claim}\label{trianglefree}
 $H$ is triangle free.
\end{claim}

If there exists $C=v_{1}e_{1}v_{2}e_{2}v_{3}e_{3}v_{1}$ in $H$, then $\tau_c(H\setminus \{e_{1},e_{2},e_{3}\})\leq (m-3)/3$ because $H=(V,E)$ is the counterexample with minimum number of edges. Considering a minimum FVS $S$ of $H'=H\setminus \{e_{1},e_{2},e_{3}\}$,  we have $S\subseteq V$ and $|S|\leq (m-3)/3$. Due to claim \ref{degree2}, we have $S\cup v_{1}$ is a FVS for $H$ and $|S\cup v_{1}|\leq |S|+1\leq m/3$, this is a contradiction with $\tau_c(H)> m/3$.

\begin{claim}\label{2regular}
 $H$ is $2$-regular.
\end{claim}

Due to claim \ref{degree2}, we just need to prove there is no vertex with degree $1$. If there exists $v\in V$ with $d(v)= 1$ in $H$, we could assume $v\in e_{1}$. Due to claim \ref{cycle} and \ref{trianglefree}, we can assume there exists $C=v_{1}e_{1}v_{2}e_{2}v_{3}e_{3}v_{4}...e_{k}v_{1}$ with $k\geq 4$ in $H$ and $e_{1}= \{v_{1},v,v_{2}\}$. Then $\tau_c(H\setminus \{e_{1},e_{2},e_{3}\})\leq (m-3)/3$ because $H=(V,E)$ is the counterexample with minimum number of edges. Considering a minimum FVS $S$ of $H'=H\setminus \{e_{1},e_{2},e_{3}\}$,  we have $S\subseteq V$ and $|S|\leq (m-3)/3$.  Due to claim \ref{degree2} and $d(v)=1$, we have $S\cup v_{3}$ is a FVS for $H$ and $|S\cup v_{3}|\leq |S|+1\leq m/3$, this is a contradiction with $\tau_c(H)> m/3$.

\begin{claim}\label{4cyclefree}
 There is no cycle with length $4$ in $H$.
\end{claim}

Suppose on the contrary, there exists $C=v_{1}e_{1}v_{2}e_{2}v_{3}e_{3}v_{4}e_{4}v_{1}$ in $H$, we have $e_{1}\cap e_{3}= e_{2}\cap e_{4}=\emptyset$ due to claim \ref{trianglefree}. We can assume $e_{1}=\{v_{1},u_{1},v_{2}\}, e_{2}=\{v_{2},u_{2},v_{3}\}, e_{3}=\{v_{3},u_{3},v_{4}\}, e_{4}=\{v_{4},u_{4},v_{1}\}$ and these vertices are distinct. Due to claim \ref{2regular}, we can assume $u_{1}\in e_{5}\neq e_{1}$, $u_{2}\in e_{6}\neq e_{2}$ and $u_{3}\in e_{7}\neq e_{3}$. Due to claim \ref{trianglefree},we have $e_{5}\neq e_{6}, e_{6}\neq e_{7}$. 

If $e_{5}= e_{7}$, we have $\tau_c(H\setminus \{e_{1},e_{2},e_{3},e_{4},e_{5},e_{6}\})\leq (m-6)/3$ because $H=(V,E)$ is the counterexample with minimum number of edges. Considering a minimum FVS $S$ of $H'=H\setminus \{e_{1},e_{2},e_{3},e_{4},e_{5},e_{6}\}$,  we have $S\subseteq V$ and $|S|\leq (m-6)/3$. Due to claim \ref{2regular}, we have $S\cup u_{2}\cup u_{4}$ is a FVS for $H$ and $|S\cup u_{2}\cup u_{4}|\leq |S|+2\leq m/3$, this is a contradiction with $\tau_c(H)> m/3$. See Figure~\ref{4cycle} for an illustration.

If $e_{5}\neq e_{7}$, we have $\tau_c(H\setminus \{e_{1},e_{2},e_{3},e_{4},e_{5},e_{7}\})\leq (m-6)/3$ because $H=(V,E)$ is the counterexample with minimum number of edges. Considering a minimum FVS $S$ of $H'=H\setminus \{e_{1},e_{2},e_{3},e_{4},e_{5},e_{7}\}$,  we have $S\subseteq V$ and $|S|\leq (m-6)/3$. Due to claim \ref{2regular}, we have $S\cup u_{1}\cup u_{3}$ is a FVS for $H$ and $|S\cup u_{1}\cup u_{3}|\leq |S|+2\leq m/3$, this is a contradiction with $\tau_c(H)> m/3$. See Figure~\ref{4cycle} for an illustration.\\

\begin{figure}[h]
\begin{center}
\includegraphics[scale=0.4]{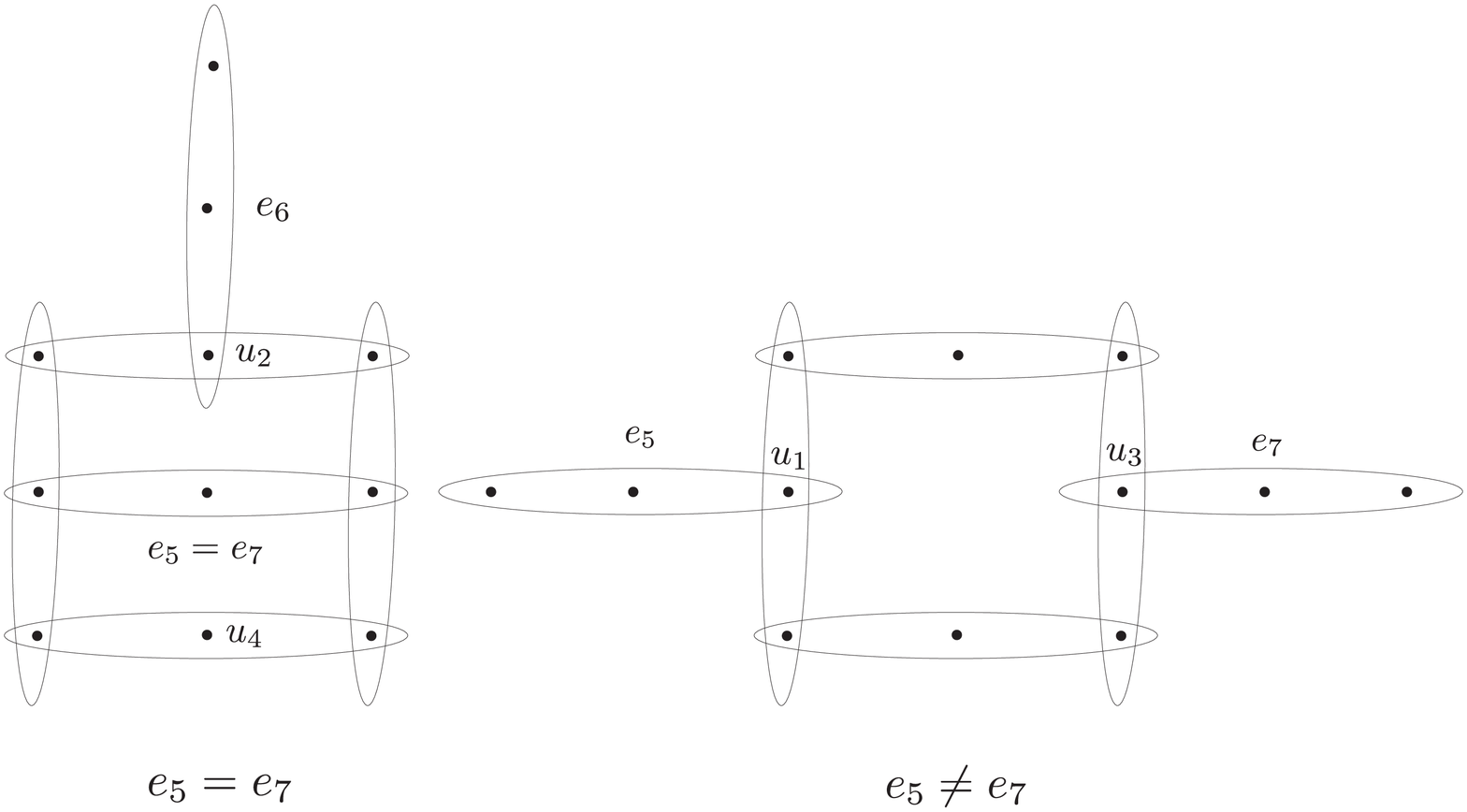}
\caption{\label{4cycle}{There is no cycle with length $4$ in $H$.}}
\end{center}
\end{figure}

Let $C=(V',E')=v_1e_1v_2e_2\ldots v_ke_kv_1$ be a shortest cycle in $H$. For each $i\in[k]$, suppose that $e_i=\{v_i,u_i,v_{i+1}\}$, where $v_{k+1}=v_1$. Due to claim \ref{trianglefree} and \ref{4cyclefree}, we have $k\geq 5$. Because $C$ is the shortest cycle, for each index pair $\{i\neq j\}\subseteq [k]$, if $e_{i}$ and $e_{j}$ are not adjacent in $C$, we have $e_{i}\cap e_{j}=\emptyset$. 

\begin{claim}\label{0mod3}
 $k\not\equiv0\pmod3$
\end{claim}

If $k\equiv0\pmod3$, we can assume $k=3t$ with $t\geq 2$. Then $\tau_c(H\setminus E')\leq (m-3t)/3$ because $H=(V,E)$ is the counterexample with minimum number of edges. Considering a minimum FVS $S$ of $H'=H\setminus E'$,  we have $S\subseteq V$ and $|S|\leq (m-3t)/3$. Due to claim \ref{2regular}, we have $S\cup {v_3,v_6,\ldots,v_{3i},\ldots,v_{3t}}$ is a FVS for $H$ and $|S\cup {v_3,v_6,\ldots,v_{3i},\ldots,v_{3t}}|\leq |S|+t\leq m/3$, this is a contradiction with $\tau_c(H)> m/3$. See Figure~\ref{kcycle} for an illustration.

\begin{claim}\label{1mod3}
 $k\not\equiv1\pmod3$
\end{claim}

If $k\equiv1\pmod3$, we can assume $k=3t+1$ with $t\geq 2$. Due to claim \ref{trianglefree}, \ref{2regular} and \ref{4cyclefree}, we have $u_{1}\in e_{3t+2}\neq e_{1},u_{3}\in e_{3t+3}\neq e_{3}$ and $e_{1},e_{2},e_{3}...e_{3t+1},e_{3t+2},e_{3t+3}$ are distinct. Then $\tau_c(H\setminus\{E',e_{3t+2},e_{3t+3}\}\leq (m-3t-3)/3$ because $H=(V,E)$ is the counterexample with minimum number of edges. Considering a minimum FVS $S$ of $H'=H\setminus\{E',e_{3t+2},e_{3t+3}\}$,  we have $S\subseteq V$ and $|S|\leq (m-3t-3)/3$. Due to claim \ref{2regular}, we have $S\cup {u_1,u_3,v_6,\ldots,v_{3i},\ldots,v_{3t}}$ is a FVS for $H$ and $|S\cup {u_1,u_3,v_6,\ldots,v_{3i},\ldots,v_{3t}}|\leq |S|+t+1\leq m/3$, this is a contradiction with $\tau_c(H)> m/3$. See Figure~\ref{kcycle} for an illustration.

\begin{claim}\label{2mod3}
 $k\not\equiv2\pmod3$
\end{claim}

If $k\equiv2\pmod3$, we can assume $k=3t+2$ with $t\geq 1$. Due to claim \ref{2regular} and the shortest cycle of $C$ , we have $u_{1}\in e_{3t+3}\neq e_{1}$ and $e_{1},e_{2},e_{3}...e_{3t+1},e_{3t+2},e_{3t+3}$ are distinct. Then $\tau_c(H\setminus\{E',e_{3t+3}\}\leq (m-3t-3)/3$ because $H=(V,E)$ is the counterexample with minimum number of edges. Considering a minimum FVS $S$ of $H'=H\setminus\{E',e_{3t+3}\}$,  we have $S\subseteq V$ and $|S|\leq (m-3t-3)/3$. Due to claim \ref{2regular}, we have $S\cup {u_1,v_4,\ldots,v_{3i+1},\ldots,v_{3t+1}}$ is a FVS for $H$ and $|S\cup {u_1,v_4,\ldots,v_{3i+1},\ldots,v_{3t+1}}|\leq |S|+t+1\leq m/3$, this is a contradiction with $\tau_c(H)> m/3$. See Figure~\ref{kcycle} for an illustration.\\

\begin{figure}[h]
\begin{center}
\includegraphics[scale=0.4]{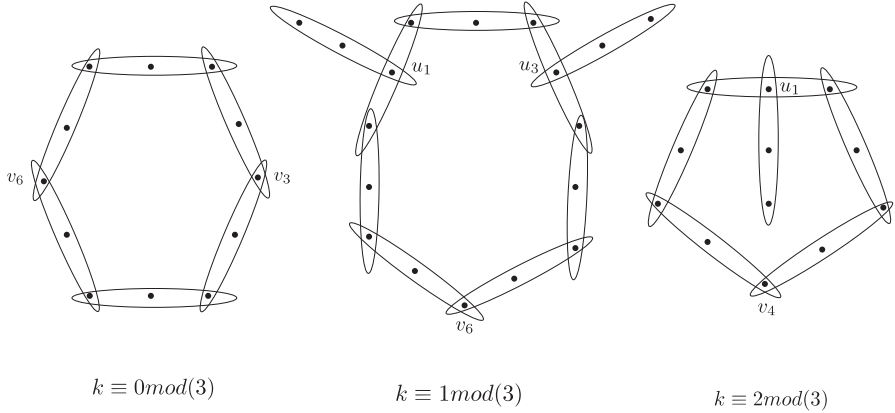}
\caption{\label{kcycle}{$k\equiv0,1,2\pmod3$.}}
\end{center}
\end{figure}

Above all, we have claim \ref{0mod3},\ref{1mod3} and \ref{2mod3}, this is impossible. Thus our assumption doesn't hold on. We finish our proof.
  \end{proof}

%{We remark that the upper bound $|\!|\mathcal H|\!|/3$ in Theorem \ref{1/3} and Corollary \ref{cor:linear1/3} is best possible.
See Figure~\ref{sharp} for illustrations of five linear 3-uniform  hypergraphs attaining the upper bound.  It is easy to prove that the maximum degree of every extremal hypergraph (those $H$ with $\tau_c(H)= m/3$) is at most three. It would be interesting to characterize all extremal hypergraphs for Theorem \ref{1/3}.

\begin{figure}[h]
\begin{center}
\includegraphics[scale=0.65]{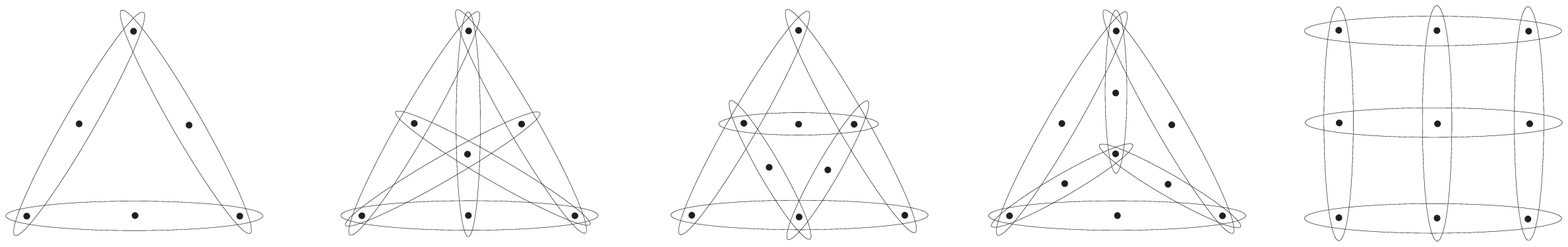}
\caption{\label{sharp}{Extremal linear 3-uniform hypergraphs $H$ with $\tau_c(H)= m/3$.}}
\end{center}
\end{figure}

%\begin{lemma}\label{cyclefree}\cite{berge1989}
%Let $H$ be a hypergraph, if $H$ has no cycle, then $\tau(H)=\nu(H)$.
%\end{lemma}

\subsection{The bounds for general hypergraph}

In this subsection, for general $3$-uniform hypergraph, we give the bound of the feedback vertex number.

\begin{theorem}\label{1/2}
Let $H$ be a $3$-uniform hypergraph with $m$ edges. Then $\tau_c(H)\le m/2$.
Furthermore, the equality holds on if and only if every component of $H$ is a $2-$cycle.
\end{theorem}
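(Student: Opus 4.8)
The plan is to prove the inequality $\tau_c(H)\le m/2$ by the same minimal-counterexample strategy used in Theorem~\ref{1/3}, and then handle the equality characterization separately. For the inequality I would take a counterexample $H=(V,E)$ with the minimum number of edges, so that $\tau_c(H)>m/2$, and establish a short chain of structural claims. First, as in Claim~\ref{cycle}, every edge lies on some cycle, for otherwise deleting a cycle-free edge does not change $\tau_c$ and contradicts minimality. Second, I would bound the degree: if some vertex $v$ has $d(v)\ge 2$, then deleting $v$ removes at least two edges, so $H\setminus v$ has at most $m-2$ edges and $\tau_c(H\setminus v)\le (m-2)/2$; adding $v$ back gives a FVS of size at most $m/2$, a contradiction. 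Hence every vertex has degree at most $1$ on the cycle structure—but since every edge lies on a cycle and the hypergraph is now $3$-uniform (not necessarily linear), the remaining case to analyze is short cycles, in particular the $2$-cycle $v_1 e_1 v_2 e_2 v_1$ where $e_1,e_2$ share the two vertices $v_1,v_2$. I expect the argument to reduce to showing that a minimal counterexample must consist entirely of such $2$-cycles, each contributing exactly one to $\tau_c$ per two edges, which forces $\tau_c(H)=m/2$ rather than the strict inequality and thereby yields the contradiction that establishes the bound.

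For the equality characterization, I would argue both directions. The easy direction: if every component of $H$ is a $2$-cycle (two edges on the same pair of through-vertices), then each component has $m_i=2$ edges and requires exactly one feedback vertex to destroy the unique cycle, so summing over the $p$ components gives $\tau_c(H)=\sum_i 1 = m/2$. The harder direction is to show that $\tau_c(H)=m/2$ forces every component to be a $2$-cycle. Here I would again use minimality-type reasoning: working component by component (since both $\tau_c$ and $m$ are additive over components), I would show that any component which is not a $2$-cycle admits a feedback vertex set strictly cheaper than half its edge count. Concretely, if a component has a vertex of degree $\ge 2$ that is not part of a bare $2$-cycle, or contains a longer cycle, then one can delete a well-chosen vertex lying on several cycles and invoke the inequality $\tau_c\le m/2$ on the smaller remaining hypergraph to gain a strict saving.

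The key computation to watch is the degree/deletion bookkeeping: deleting a vertex $v$ of degree $d(v)$ removes $d(v)$ edges, and we need $\lceil (m-d(v))/2\rceil + 1 \le m/2$, i.e.\ roughly $d(v)\ge 2$ suffices for the inequality, but for the \emph{strict} inequality in the equality analysis I must ensure the saving is genuinely strict, which is where $2$-cycles are exactly the tight case (a $2$-cycle has two edges meeting in two vertices, each of degree $2$, and deleting either through-vertex kills both edges at a cost matching $m/2$ exactly). I would therefore isolate the claim: \emph{a connected $3$-uniform hypergraph $H$ with every edge on a cycle satisfies $\tau_c(H)<m/2$ unless $H$ is a single $2$-cycle.} Once this claim is proved, both the strict inequality for the general bound (when some component is not a $2$-cycle) and the equality characterization follow by summing over components.

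The main obstacle I anticipate is the case analysis for the harder direction of the characterization: unlike the linear case, distinct edges may share two vertices, so $2$-cycles and other short configurations proliferate, and I must carefully verify that every component other than a lone $2$-cycle contains enough overlapping cycle structure to guarantee a strictly sub-$m/2$ feedback vertex set. Handling the boundary between "a $2$-cycle" and "a $2$-cycle with extra attached edges" cleanly, and ensuring no degenerate multi-edge configuration sneaks past the strict-saving argument, will be the delicate part.
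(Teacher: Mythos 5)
Your proof of the inequality $\tau_c(H)\le m/2$ is essentially the paper's: the two claims you state (every edge lies on a cycle; every vertex has degree at most $1$, via the deletion argument for $d(v)\ge 2$) are exactly the paper's Claims \ref{cycle1} and \ref{degree1a}, and they already finish the proof — a hypergraph in which every vertex has degree at most $1$ contains no cycle at all, so by your first claim it has no edges, whence $m=0$ and $\tau_c(H)=0\not>m/2$. The "remaining case" of $2$-cycles that you then set out to analyze cannot arise (a $2$-cycle forces two vertices of degree $2$, contradicting your own degree bound), so that part of your discussion is confusion rather than a needed step; but since the correct ingredients are all present, the inequality part stands.

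The genuine gap is the necessity direction of the equality characterization, which you leave as a plan ("delete a well-chosen vertex lying on several cycles and invoke the inequality \ldots to gain a strict saving") rather than a proof, and the plan as stated cannot work. Once $\tau_c(H)=m/2$, the same deletion argument forces $d(v)\le 2$ for every vertex, and then deleting any single vertex yields only $\tau_c(H)\le \tau_c(H\setminus v)+1\le (m-2)/2+1=m/2$ — never a strict saving. So vertex deletion alone cannot separate a non-$2$-cycle component from the bound; an extra mechanism is required, and this is precisely what the paper supplies. First, the paper does not hunt for a $2$-cycle by case analysis: it observes that $\tau_c(H)=m/2>m/3$ together with Theorem~\ref{1/3} forces $H$ to be non-linear, so a $2$-cycle $v_1e_1v_2e_2v_1$ exists. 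Second, the strict saving comes not from the vertex deletion itself but from an edge that becomes cycle-free afterwards: if $H$ is connected and larger than this $2$-cycle, the third vertex $u$ of $e_1$ lies in another edge $e$, has degree exactly $2$, and the equality $\tau_c(H\setminus u)=(m-2)/2$ (forced by $\tau_c(H)=m/2$) is then contradicted because in $H\setminus u$ the edge $e_2$ contains the two degree-$1$ vertices $v_1,v_2$, hence lies on no cycle and can be removed for free, giving $\tau_c(H\setminus u)=\tau_c(H\setminus u\setminus e_2)\le (m-3)/2<(m-2)/2$. Without this "free edge after deletion" idea (or some substitute for it), your component-by-component claim — that every connected $H$ with every edge on a cycle satisfies $\tau_c(H)<m/2$ unless it is a single $2$-cycle — remains unproved, and you yourself flag this case analysis as the part you have not carried out.
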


\begin{proof}
First, we will prove $\tau_c(H)\le m/2$. Suppose the theorem fails. Let us take out the counterexample $H=(V,E)$ with minimum number of edges, thus $\tau_c(H)> m/2$. Without loss of generality, we can assume $H$ has no isolated vertex. We will get a contradiction through two claims.

\begin{claim}\label{cycle1}
  Every edge in $E$ is contained in some cycle in $H$.
\end{claim}

If there exists $e\in E$ which doesn't belong to any cycle of $H$. Then $\tau_c(H)= \tau_c(H\setminus e)$. Because $H=(V,E)$ is the counterexample with minimum number of edges, we have $\tau_c(H\setminus e)\leq (m-1)/2$. Thus $\tau_c(H)< m/2$, this is a contradiction with $\tau_c(H)> m/2$.

\begin{claim}\label{degree1a}
 $\forall v\in H, d(v)\leq 1$.
\end{claim}

If there exists $v\in V$ with $d(v)\geq 3$ in $H$, then $\tau_c(H\setminus v)\leq (m-d(v))/2\leq (m-2)/2$ because $H=(V,E)$ is the counterexample with minimum number of edges. Considering a minimum FVS $S$ of $H'=H\setminus v$, we have $S\subseteq V\setminus v$ and $|S|\leq (m-2)/2$. Thus $S\cup v$ is a FVS for $H$ and $|S\cup v|= |S|+1\leq m/2$, this is a contradiction with $\tau_c(H)> m/2$.

The above two claims lead to a contradiction immediately. \\\\
Second, we will prove the equality holds on if and only if every component of $H$ is a $2-$cycle. \\
Sufficiency: This is obvious.\\
Necessity: Let $H=(V,E)$ be a $3$-uniform hypergraph with $\tau_c(H)= m/2$. Without loss of generality, we can assume $H$ is connected and we just need to prove $H$ is a $2-$cycle. We will finish the proof through a series of claims.

\begin{claim}\label{cycle2}
  Every edge in $E$ is contained in some cycle in $H$.
\end{claim}

If there exists $e\in E$ which doesn't belong to any cycle of $H$. Then $\tau_c(H)= \tau_c(H\setminus e)$. Because $H=(V,E)$ is the counterexample with minimum number of edges, we have $\tau_c(H\setminus e)\leq (m-1)/2$. Thus $\tau_c(H)< m/2$, this is a contradiction with $\tau_c(H)= m/2$.

\begin{claim}\label{degree2b}
 $\forall v\in H, d(v)\leq 2$.
\end{claim}

If there exists $v\in V$ with $d(v)\geq 3$ in $H$, then $\tau_c(H\setminus v)\leq (m-d(v))/2\leq (m-3)/2$ because $H=(V,E)$ is the counterexample with minimum number of edges. Considering a minimum FVS $S$ of $H'=H\setminus v$, we have $S\subseteq V\setminus v$ and $|S|\leq (m-3)/2$. Thus $S\cup v$ is a FVS for $H$ and $|S\cup v|= |S|+1\leq (m-1)/2< m/2$, this is a contradiction with $\tau_c(H)= m/2$.

\begin{claim}\label{degree2c}
 $\forall v\in H$, if $d(v)= 2$, then $\tau_c(H\setminus v)= (m-2)/2$.
\end{claim}

Combining the next inequality and $\tau_c(H)= m/2$, this claim is obvious.

\[\tau_c(H)\leq \tau_c(H\setminus v)+1\leq (m-2)/2+1= m/2\]

Now $\tau_c(H)= m/2$, according to Theorem~\ref{1/3}, $H$ is not linear, thus $H$ contains a $2-$cycle: $v_{1}e_{1}v_{2}e_{2}v_{1}$. If $H$ is not a $2-$cycle, by claim~\ref{degree2b}, we can assume $e_{1}=\{v_{1},u,v_{2}\},u\in e$, $\{e,e_{1},e_{2}\}$ are three distinct edges. By claim~\ref{degree2c}, $d(u)= 2, \tau_c(H\setminus u)= (m-2)/2$. Notice that in $H\setminus u$, $e_{2}$ contains two $1-$ degree vertices, thus $e_{2}$ can not be contained in any cycle. Thus, we have the next inequality, which is a contradiction with $\tau_c(H\setminus u)= (m-2)/2$.

\[\tau_c(H\setminus u)=\tau_c(H\setminus u\setminus e_{2})\leq (m-3)/2< (m-2)/2\]

Above all, we finish the theorem's proof.

\end{proof}

\section{The feedback edge number}

In this section, for general $3$-uniform hypergraph, we give the bound of the feedback edge number.

\begin{theorem}\label{cyclecover}
Let $H=(V,E)$ be a  $3$-uniform hypergraph with $p$ components, then $\tau_c'(H)\le 2m-n+p$.
\end{theorem}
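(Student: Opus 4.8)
The plan is to pass to the bipartite \emph{incidence graph} $B=B(H)$, whose two vertex classes are $V$ and $E$ and whose edges are the incidences $\{v,e\}$ with $v\in e$. The crucial observation is that Berge cycles of $H$ correspond exactly to cycles of $B$: a cycle $v_1e_1v_2e_2\cdots v_ke_kv_1$ of $H$ (distinct $v_i$, distinct $e_i$, $\{v_i,v_{i+1}\}\subseteq e_i$) traces out the closed walk $v_1\,e_1\,v_2\,e_2\cdots v_k\,e_k\,v_1$ in $B$, which is a genuine cycle precisely because the $v_i$ and the $e_i$ are distinct; conversely, since $B$ is bipartite, every cycle of $B$ alternates between $V$ and $E$, has even length at least $4$, and its consecutive-incidence condition is exactly $\{v_i,v_{i+1}\}\subseteq e_i$. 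Hence $H$ is acyclic if and only if $B$ is a forest, and deleting a hyperedge $e$ from $H$ amounts to deleting the degree-$3$ vertex $e$ (with its three incident edges) from $B$. First I would record this equivalence as the backbone of the argument.

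Next I would count. Since $H$ is $3$-uniform, $B$ has $n+m$ vertices and exactly $3m$ edges, and because $H$ has $p$ components (isolated vertices included) so does $B$. Fixing a spanning forest $F$ of $B$, the number of tree edges is $(n+m)-p$, so the number of non-tree edges is $3m-(n+m)+p=2m-n+p$; that is, the cyclomatic number of $B$ equals exactly the target quantity $2m-n+p$.

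The construction then goes as follows. For each hyperedge $e$, its three incidences in $B$ are each either a tree edge of $F$ or a non-tree edge; let $A\subseteq E$ be the set of those hyperedges having at least one non-tree incidence. I claim $A$ is a feedback edge set: after deleting $A$, every surviving hyperedge has all three of its incidences in $F$, so the incidence graph of $H\setminus A$ is a subgraph of the forest $F$ and is therefore itself a forest, whence $H\setminus A$ is acyclic. To bound $|A|$, I would observe that each $e\in A$ contributes at least one to the count of non-tree edges, so $|A|\le 2m-n+p$, giving $\tau_c'(H)\le|A|\le 2m-n+p$. The part needing the most care is the cycle correspondence in the first step: matching the distinctness requirements in the definition of a Berge cycle with the requirement that a cycle of $B$ repeat no vertex, and checking that the component counts of $H$ and $B$ agree even in the presence of isolated vertices. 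Once that equivalence is secured, the spanning-forest counting and the choice of $A$ are routine.
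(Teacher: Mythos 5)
Your proof is correct, but it takes a genuinely different route from the paper's. The paper works directly in the hypergraph: it fixes a \emph{minimum} FES $A$, proves two induction lemmas (every connected $3$-uniform hypergraph satisfies $n\le 2m+1$, with equality exactly for hypertrees), applies them to each component of $H\setminus A$ to get the identity $2\tau_c'(H)=2m-n+k$, where $k$ is the number of components of $H\setminus A$, and then uses the minimality of $A$ to argue $k\le \tau_c'(H)+p$ (each removed edge lies on a cycle when put back, so its removal splits off at most one extra component). You instead pass to the bipartite incidence graph $B$, establish that Berge cycles of $H$ correspond exactly to cycles of $B$ (so $H$ is acyclic iff $B$ is a forest, and component counts agree), and then take a spanning forest $F$ of $B$: the hyperedges with at least one non-tree incidence form a FES of size at most the cyclomatic number $3m-(n+m)+p=2m-n+p$. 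The key points you flagged do hold: cycles of the simple bipartite graph $B$ have even length at least $4$ and so correspond to Berge cycles of length $k\ge 2$, and isolated vertices of $H$ are isolated in $B$, so both structures have $p$ components; the bound $|A|\le 2m-n+p$ works because distinct hyperedges have disjoint incidence stars, so choosing one non-tree incidence per member of $A$ is injective into the non-tree edges. Your argument buys constructiveness and economy: it produces an explicit FES from any spanning forest in polynomial time and replaces the paper's two induction lemmas with the standard edge count of a forest. What the paper's approach buys in exchange is finer information: its identity $2\tau_c'(H)=2m-n+k$ ties the \emph{minimum} FES exactly to the number of components remaining after its removal, which your one-sided construction does not recover.
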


%\begin{theorem}\label{cyclecover}
%Let $H(V,E)$ be a $3$-uniform hypergraph, $C$ is the components' set of $H$, then $\tau_c'(H)\leq 2|E|-|V|+|C|$.
%\end{theorem}

%\begin{theorem}\label{cyclecover}
%Let $H(V,E)$ be a $3$-uniform hypergraph, $C$ is the components' set of $H$, then $\tau_c'(H)\leq 2|E|-|V|+|C|$.
%\end{theorem}

Before proving the theorem above, we will prove a series of lemmas which are very useful.

\begin{lemma}\label{connectn2m1}
For every $3-$uniform connected hypergraph $H(V,E)$, $n\leq 2m+1$ holds on.
\end{lemma}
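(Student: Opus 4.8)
The plan is to grow a connected spanning subhypergraph one edge at a time, using connectivity to force each freshly added edge to meet the part already built, and then simply to count vertices. First I would dispose of the degenerate cases: if $m=0$ then a connected $H$ is a single vertex (or empty), so $n\le 1=2m+1$; if $m=1$ then $H$ is one edge and $n=3=2m+1$. So from now on I assume $m\ge 2$.

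The key combinatorial step is to produce an ordering $e_1,e_2,\dots,e_m$ of $E$ such that for every $i$ with $2\le i\le m$ we have $e_i\cap(e_1\cup e_2\cup\cdots\cup e_{i-1})\neq\emptyset$. I would build this ordering greedily: pick any edge as $e_1$, and having chosen $e_1,\dots,e_{i-1}$ with union $V_{i-1}=e_1\cup\cdots\cup e_{i-1}$, choose as $e_i$ any not-yet-chosen edge meeting $V_{i-1}$. The one point to verify — and the only place connectivity is used — is that such an edge always exists while unchosen edges remain: if no remaining edge met $V_{i-1}$, then every edge would lie entirely inside $V_{i-1}$ or entirely inside $V\setminus V_{i-1}$, exhibiting a partition of $V$ into two nonempty classes with no edge meeting both, contradicting that $H$ is connected. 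Since a connected hypergraph with at least one edge has no isolated vertex, every vertex lies in some edge, so the final union $V_m$ equals $V$.

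With the ordering in hand the count is immediate. Since $H$ is $3$-uniform, $|V_1|=|e_1|=3$; and for each $i\ge 2$ the edge $e_i$ contributes at most $|e_i|-1=2$ new vertices, because it shares at least one vertex with $V_{i-1}$, whence $|V_i|\le|V_{i-1}|+2$. Telescoping these inequalities gives $n=|V_m|\le 3+2(m-1)=2m+1$, which is exactly the claimed bound.

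I expect the main (though modest) obstacle to be the rigorous justification of the greedy ordering: arguing cleanly from the definition of connectedness that at every stage some remaining edge touches the already-built part, and confirming that $V=\bigcup_i e_i$ so that no vertices are left uncounted. Once that structural claim is secured, the remainder is the one-line telescoping estimate above.
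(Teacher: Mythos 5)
Your proof is correct, but it takes a different route from the paper's. The paper argues by induction on $m$: delete an arbitrary edge $e$, observe that since $|e|=3$ the hypergraph $H\setminus e$ has at most three components (any component avoiding $e$ entirely would already have been disconnected in $H$), apply the inductive bound $n_i\le 2m_i+1$ to each component, and sum to get $n\le 2(m-1)+p\le 2m+1$. You instead build the hypergraph up rather than tearing it down: a greedy ordering $e_1,\dots,e_m$ in which each new edge meets the union of its predecessors, so the first edge contributes $3$ vertices and each later edge at most $2$, giving $n\le 3+2(m-1)=2m+1$ by telescoping. The two arguments exploit $3$-uniformity in dual ways (deletion creates at most $3$ components versus accretion adds at most $2$ vertices), and both handle connectivity correctly — your partition argument for the existence of the next greedy edge is sound, as is your observation that connectivity with $m\ge 1$ forces $V=\bigcup_i e_i$. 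What the paper's deletion-induction buys is reusability: the very same skeleton is recycled to prove Lemma~\ref{connectntree2m1} (equality iff $H$ is a hypertree), where one tracks when the component count is exactly $3$ at every step. What your accretion argument buys is that it is induction-free, generalizes verbatim to $r$-uniform hypergraphs ($n\le (r-1)m+1$), and makes the equality case transparent — equality forces every $e_i$ to meet $V_{i-1}$ in exactly one vertex, which is essentially the hypertree condition — so it could serve as an alternative basis for the follow-up lemma as well.
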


\begin{proof}
We prove this lemma by induction on $m$. When $m=0$, $H(V,E)$ is an isolate vertex, $n\leq 2m+1$ holds on.
Assume this lemma holds on for $m\leq k$. When $m=k+1$, take arbitrarily one edge $e$ and consider the subgraph $H\setminus e$.
obviously, $H\setminus e$ has at most three components. Assume $H\setminus e$ has $p$ components $H_{i}(V_{i},E_{i})$ and $n_{i}=|V_{i}|, m_{i}=|E_{i}|$ for each $i\in\{1,...,p\}$. Then by our induction, $n_{i}\leq 2m_{i}+1$ holds on. So we have

\begin{equation}
n=n_{1}+...n_{p}\leq 2m_{1}+...2m_{p}+p=2(m-1)+p=2m+p-2\leq 2m+1
\end{equation}

By induction, we finish our proof.
\end{proof}

\begin{lemma}\label{connectntree2m1}
For every $3-$uniform connected hypergraph $H(V,E)$, $n=2m+1$ if and only if $H$ is a hypertree.
\end{lemma}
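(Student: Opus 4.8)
The plan is to run an induction on $m$ that mirrors the counting in Lemma~\ref{connectn2m1}, after first isolating the single combinatorial fact that drives everything: what happens to the components when one edge is deleted. Throughout I treat a \emph{hypertree} as a connected hypergraph containing no cycle in the sense defined in Section~1. Fix a connected $3$-uniform $H$ and an edge $e=\{a,b,c\}$. I would first establish two facts about $H\setminus e$: (a) every component of $H\setminus e$ contains at least one of $a,b,c$, so $H\setminus e$ has at most three components; and (b) two of the vertices $a,b,c$ lie in the same component of $H\setminus e$ if and only if $e$ lies on a cycle of $H$. Fact (a) follows by taking, for any vertex $v$, a shortest path in $H$ from $v$ to the set $\{a,b,c\}$; such a path cannot traverse $e$ (otherwise it would have reached $\{a,b,c\}$ sooner), so it survives in $H\setminus e$ and connects $v$ to one of $a,b,c$. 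Fact (b) is the observation that a path between two vertices of $e$ avoiding $e$, together with $e$ itself, forms a cycle through $e$, and conversely any cycle through $e$ supplies such a path.

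With (a) and (b) in hand, I would prove \emph{necessity} directly from Lemma~\ref{connectn2m1}, with no further induction. Suppose $n=2m+1$ and let $e$ be an arbitrary edge; write $H\setminus e$ as components $H_i(V_i,E_i)$, $i\in[p]$, with $n_i=|V_i|$ and $m_i=|E_i|$. Then $\sum_i m_i=m-1$ and $\sum_i n_i=n$, and Lemma~\ref{connectn2m1} gives $n_i\le 2m_i+1$, whence $n\le 2(m-1)+p=2m+p-2$. Since $n=2m+1$ forces $p\ge 3$, while fact (a) gives $p\le 3$, we obtain $p=3$; thus $a,b,c$ lie in three distinct components and, by fact (b), $e$ lies on no cycle. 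As $e$ was arbitrary, $H$ has no cycle and is therefore a hypertree.

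For \emph{sufficiency} I would induct on $m$. The base case $m=0$ is the single vertex with $n=1=2\cdot 0+1$. Let $H$ be a hypertree with $m\ge 1$ edges and pick any edge $e=\{a,b,c\}$. Since $H$ has no cycle, fact (b) shows no two of $a,b,c$ share a component of $H\setminus e$, so by fact (a) there are exactly three components $H_1,H_2,H_3$, each connected and, being a subhypergraph of the acyclic $H$, itself acyclic and hence a hypertree. Applying the inductive hypothesis to each gives $n_i=2m_i+1$, and with $m=m_1+m_2+m_3+1$ and $n=n_1+n_2+n_3$ we conclude $n=2(m_1+m_2+m_3)+3=2(m-1)+3=2m+1$, completing the induction.

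I expect the main obstacle to be the careful justification of facts (a) and (b) at the level of Berge paths and cycles: one must argue cleanly that deleting a single hyperedge splits a connected $3$-uniform hypergraph into at most three pieces, and that the surviving connectivity among $a,b,c$ is detected exactly by cycles through $e$. Once this edge-deletion dictionary is set up, both directions are short—necessity is a one-line consequence of the equality case of Lemma~\ref{connectn2m1}, and sufficiency is a routine induction.
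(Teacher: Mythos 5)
Your proposal is correct and takes essentially the same route as the paper: sufficiency by induction on $m$ after deleting a single edge, and necessity by applying the bound of Lemma~\ref{connectn2m1} to the components of $H\setminus e$ (you argue the contrapositive directly, where the paper argues by contradiction via an edge lying on a cycle). The only real difference is that your facts (a) and (b) rigorously justify the component counts that the paper merely asserts (``exactly three components'' when deleting an edge of a hypertree, ``at most two'' when deleting a cycle edge), which tightens the exposition but does not constitute a different method.
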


\begin{proof}
sufficiency: if $H$ is a hypertree, we prove $n=2m+1$ by induction on $m$. When $m=0$, $H(V,E)$ is an isolate vertex, $n=2m+1$ holds on.
Assume this lemma holds on for $m\leq k$. When $m=k+1$, take arbitrarily one edge $e$ and consider the subgraph $H\setminus e$.
Because $H$ is a hypertree, $H\setminus e$ has exactly three components, denoted by $H_{i}(V_{i},E_{i})$ and $n_{i}=|V_{i}|, m_{i}=|E_{i}|$ for each $i\in\{1,2,3\}$. Because every component is a hypertree, thus by our induction, $n_{i}= 2m_{i}+1$ holds on. So we have

\begin{equation}
n=n_{1}+n_{2}+n_{3}=2m_{1}+2m_{2}+2m_{3}+3=2(m-1)+3=2m+1
\end{equation}

By induction, we finish the sufficiency proof.\\

necessity: We prove by contradiction. If $H$ is not a hypertree, $H$ contain a cycle $C$. Take arbitrarily one edge $e$ in $C$ and consider the subgraph $H\setminus e$. obviously, $H\setminus e$ has at most two components. Assume $H\setminus e$ has $p$ components $H_{i}(V_{i},E_{i})$ and $n_{i}=|V_{i}|, m_{i}=|E_{i}|$ for each $i\in\{1,...,p\}$. Then by lemma~\ref{connectn2m1}, $n_{i}\leq 2m_{i}+1$ holds on. So we have

\begin{equation}
n=n_{1}+...n_{p}\leq 2m_{1}+...2m_{p}+p=2(m-1)+p=2m+p-2\leq 2m< 2m+1
\end{equation}

which is a contradiction with $n=2m+1$. Thus $H$ is a hypertree and we finish our necessity proof.

\end{proof}

%\begin{lemma}\label{acycle}
%Let $H(V,E)$ be a connected $3$-uniform hypergraph, then $|V|\leq 2|E|+1$.
%Furthermore, the inequality holds on if and only if $H(V,E)$ is a tree.
%\end{lemma}

%\begin{lemma}\label{acycle}
%Let $H(V,E)$ be a connected linear $3$-uniform hypergraph, if $H$ has no cycle, then $|V|=2|E|+1$.
%\end{lemma}

%\begin{proof}
%We prove by induction on $|E|$. When $|E|=0, |V|=1$ or $|E|=1, |V|=3$, the lemma trivially holds on. We assume the
%lemma holds on when $|E|\leq k$, here $k\geq 1$. When $|E|= k+1$, let us pick arbitrarily $e\in E$. Because $H$ is acyclic, $H\setminus e$ must contain three component $H_{1}(V_{1},E_{1}), H_{2}(V_{2},E_{2}), H_{3}(V_{3},E_{3})$. According to our assumption, we have $|V_{1}|=2|E_{1}|+1, |V_{2}|=2|E_{2}|+1, |V_{3}|=2|E_{3}|+1$. Combined with $|V|=|V_{1}|+|V_{2}|+|V_{3}|$ and $|E|=|E_{1}|+|E_{2}|+|E_{3}|+1$, we have $|V|=|V_{1}|+|V_{2}|+|V_{3}|= 2|E_{1}|+2|E_{2}|+2|E_{3}|+3= 2|E|+1$.
%\end{proof}
Next we will prove the main theorem:

\begin{theorem}\label{cyclecover}
Let $H=(V,E)$ be a $3$-uniform hypergraph with $p$ components, then $\tau_c'(H)\le 2m-n+p$.
\end{theorem}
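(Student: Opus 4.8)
The plan is to prove $\tau_c'(H)\le 2m-n+p$ by induction on the number of edges $m$, deleting one edge from a cycle at each step and invoking Lemmas~\ref{connectn2m1} and~\ref{connectntree2m1} to handle the acyclic case. Throughout I keep the vertex set fixed, so $n$ is constant, and I count isolated vertices as components.

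First I would settle the case where $H$ has no cycle: then $\emptyset$ is a FES and $\tau_c'(H)=0$, while each of the $p$ components is a connected cycle-free hypergraph, i.e.\ a hypertree. The sufficiency direction of Lemma~\ref{connectntree2m1} gives $n_i=2m_i+1$ for each component, and summing over all $p$ of them yields $n=2m+p$, whence $2m-n+p=0=\tau_c'(H)$. (More generally, Lemma~\ref{connectn2m1} summed over components gives $n\le 2m+p$, so the bound $2m-n+p\ge 0$ is always consistent.)

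Next, if $H$ contains a cycle $C=v_1e_1v_2e_2\cdots v_ke_kv_1$, I fix one of its edges $e=e_i=\{v_i,u_i,v_{i+1}\}$ and pass to $H'=H\setminus e$, which is $3$-uniform with $n$ vertices, $m-1$ edges and, say, $p'$ components. Adjoining $e$ to any FES of $H'$ produces a FES of $H$, so $\tau_c'(H)\le \tau_c'(H')+1$; applying the induction hypothesis to $H'$ and then the key estimate $p'\le p+1$ gives
\[
\tau_c'(H)\le \big(2(m-1)-n+p'\big)+1\le 2(m-1)-n+(p+1)+1=2m-n+p,
\]
which closes the induction.

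The hard part is the component estimate $p'\le p+1$, and this is exactly where I use that $e$ lies on a cycle. Because $v_i$ and $v_{i+1}$ remain joined in $H'$ by the surviving arc $v_{i+1}e_{i+1}\cdots e_{i-1}v_i$ of $C$, deleting $e$ can separate from the component of $e$ only the third vertex $u_i$ together with whatever was attached to the rest solely through $e$; hence that component breaks into at most two pieces and every other component is untouched, giving $p'\le p+1$. (A generic edge not on a cycle could instead split its component into three, which is why the choice of $e$ matters; note also that if $u_i$ equals some other cycle vertex $v_j$ then no new component appears and the estimate only improves.) The recursion terminates because $m$ strictly decreases and the acyclic configuration reached is precisely the base case above.
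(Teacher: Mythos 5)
Your proof is correct, but it is organized quite differently from the paper's. The paper argues globally: it fixes a \emph{minimum} FES $A$, applies Lemma~\ref{connectntree2m1} to the components of $H\setminus A$ to obtain the exact identity $2\tau_c'(H)=2m-n+k$ (where $k$ is the number of components of $H\setminus A$), and then bounds $k\le p+|A|$ by removing the edges of $A$ sequentially, using the minimality of $A$ to guarantee that each $e_i\in A$ lies on a cycle avoiding the other edges of $A$, so that each removal creates at most one new component. You instead induct on $m$: delete a single edge lying on any cycle, use the same ``deleting a cycle edge splits off at most one new component'' observation to get $p'\le p+1$, combine it with $\tau_c'(H)\le\tau_c'(H\setminus e)+1$, and invoke the hypertree count $n_i=2m_i+1$ only in the acyclic base case. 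Your route is somewhat more elementary --- it never needs the minimality of a feedback edge set, only the existence of some cycle edge, and it uses only the sufficiency direction of Lemma~\ref{connectntree2m1} --- while the paper's route yields the sharper intermediate identity $2\tau_c'(H)=2m-n+k$, which pinpoints exactly when the stated inequality is tight (namely when $k=\tau_c'(H)+p$). Both arguments ultimately hinge on the same two ingredients: the hypertree vertex--edge relation and the fact that an edge lying on a cycle, when deleted, increases the number of components by at most one because two of its three vertices stay connected through the rest of the cycle.
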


\begin{proof}  Pick arbitrarily a minimum FES $A\subseteq E$, then $\tau_c'(H)=|A|$. Suppose that $H\setminus A$ contains exactly $k$ components $H_{i}=(V_{i},E_{i})$, $i=1,\ldots,k$. It follows from Lemma~\ref{connectntree2m1} that $n_{i}=2m_{i}+1$ for each $i\in [k]$. Thus $n=\sum_{i\in [k]}n_{i}= 2\sum_{i\in [k]}m_{i}+k= 2(m-\tau_c'(H))+k$, which means $2\tau_c'(H)= 2m-n+k$. To establish the lemma, it suffices to prove  $k\leq \tau_c'(H)+ p$.

 In case of  $\tau_c'(H)=0$, we have $A=\emptyset$ and $k=p=\tau_c'(H)+p$.  In case of  $\tau_c'(H)\ge 1$, suppose that $A=\{e_{1},...,e_{|A|}\}$. Because $A$ is a minimum FES of $H$,  for each $i\in [|A|]$, there is a cycle $C_{i}$ in $H\setminus (A\setminus \{e_{i}\})$ such that $e_{i}\in C_{i}$. Considering $H\setminus A$ being obtained from $H$ be removing $e_1,e_2,\ldots,e_{|A|}$ sequentially, for $i=1,\ldots,|A|$, since $|e_i|=3$, the presence of $C_i$ implies that the removal of $e_i$ can create at most one more component. Therefore we have  $k\le p+|A|$ as desired.
% Obviously we have $e_{i}\in C_{i}\subseteq H\setminus \cup_{1\leq j\leq i-1}e_{j}$. Thus for each $i\in [t]$, we have $| \mathcal C(H\setminus \cup_{1\leq j\leq i}e_{j}) |\leq | \mathcal C(H\setminus \cup_{1\leq j\leq i-1}e_{j}) |+1$. Thus $\sum_{i\in [t]}| \mathcal C(H\setminus \cup_{1\leq j\leq i}e_{j}) |\leq \sum_{i\in [t]}| \mathcal C(H\setminus \cup_{1\leq j\leq i-1}e_{j}) |+t$, which means $k= |\mathcal C(H\setminus A)|\leq |\mathcal C(H) |+t= |\mathcal F|+ | \mathcal C |$. $2|\mathcal F|= 2|E|-|V|+k$ and $k\leq |\mathcal F|+ |\mathcal C|$ tells us $|\mathcal F|\leq 2|E|-|V|+|\mathcal C|$.
\end{proof}

\section{Conclusion and future work}

Conclusion: $(i)$ if $H$ is a linear $3$-uniform hypergraph with $m$ edges, then $\tau_c(H)\le m/3$. $(ii)$ if $H$ is a $3$-uniform hypergraph with $m$ edges, then $\tau_c(H)\le m/2$ and furthermore, the equality holds on if and only if every component of $H$ is a $2-$cycle. $(iii)$ if $H$ is a $3$-uniform hypergraph with $p$ components, then $\tau_c'(H)\le 2m-n+p$.\\\\
Future work: for linear $3-$uniform hypergraph, it would be interesting to characterize all extremal hypergraphs (those $H$ with $\tau_c(H)= m/3$).

\bibliography{all}

\end{document}